\let\oldbibliography\thebibliography
\renewcommand{\thebibliography}[1]{%
\oldbibliography{#1}%
\setlength{\itemsep}{0pt}%
}
\newtheorem{definition}{Definition}[section]
\newtheorem{theorem}{Theorem}[section]
\newtheorem{lemma}{Lemma}[section]
\newtheorem{corollary}{Corollary}[section]
\newtheorem{proposition}{Proposition}[section]
\newtheorem{remark}{Remark}[section]
\newcommand{\bt}{\begin{theorem}}
\newcommand{\et}{\end{theorem}}
\newcommand{\bl}{\begin{lemma}}
\newcommand{\el}{\end{lemma}}
\newcommand{\bd}{\begin{definition}}
\newcommand{\ed}{\end{definition}}
\newcommand{\bc}{\begin{corollary}}
\newcommand{\ec}{\end{corollary}}
\newcommand{\bp}{\begin{proof}}
\newcommand{\ep}{\end{proof}}
\newcommand{\bx}{\begin{example}}
\newcommand{\ex}{\end{example}}
\newcommand{\bi}{\begin{exercise}}
\newcommand{\ei}{\end{exercise}}
\newcommand{\bo}{\begin{prop}}
\newcommand{\eo}{\end{prop}}
\newcommand{\br}{\begin{remark}}
\newcommand{\er}{\end{remark}}
\newcommand{\be}{\begin{equation}}
\newcommand{\ee}{\end{equation}}
\newcommand{\ba}{\begin{align}}
\newcommand{\ea}{\end{align}}
\newcommand{\bn}{\begin{enumerate}}
\newcommand{\en}{\end{enumerate}}
\newcommand{\bg}{\begin{align*}}
\newcommand{\bcs}{\begin{cases}}
\newcommand{\ecs}{\end{cases}}
\newcommand{\sg}{\sigma}
\newcommand{\bean}{\begin{eqnarray*}}
\newcommand{\eean}{\end{eqnarray*}}
\numberwithin{equation}{section}
\begin{document}
\title{{\bf  Exact Asymptotic Behavior of Singular Positive Solutions  of Fractional Semi-Linear Elliptic  Equations}\thanks {Supported by  NSFC.   \qquad E-mail addresses:  hui-yang15@mails.tsinghua.edu.cn (H.  Yang),   \qquad \qquad \qquad  wzou@math.tsinghua.edu.cn (W.  Zou)}
}

\date{}
\author{\\{\bf Hui  Yang$^{1}$,\;\;  Wenming Zou$^{2}$}\\
\footnotesize {\it  $^{1}$Yau Mathematical Sciences Center, Tsinghua University, Beijing 100084, China}\\
\footnotesize {\it  $^{2}$Department of Mathematical Sciences, Tsinghua University, Beijing 100084, China}
}

\maketitle
\begin{center}
\begin{minipage}{120mm}
\begin{center}{\bf Abstract}\end{center}

In this paper, we prove the exact asymptotic behavior of singular positive solutions of  fractional semi-linear equations 
$$ (-\Delta)^\sigma u = u^p~~~~~~~\textmd{in} ~ B_1\backslash \{0\}$$   
with an isolated singularity, where $\sigma \in (0,  1)$ and $\frac{n}{n-2\sigma}  <  p  < \frac{n+2\sigma}{n-2\sigma}$.

\vskip0.10in

\noindent {\it Mathematics Subject Classification (2010):  35B09;  35B40;  35J70;  35R11}

\end{minipage}

\end{center}

\vskip0.390in

\section{Introduction and Main results}
In this paper, we shall describe the exact asymptotic behavior of singular positive solutions of 
\begin{equation}\label{Iso1}
(-\Delta)^\sg u= u^p~~~~~~~~~~~\textmd{in}~ B_1 \backslash  \{0\}
\end{equation}
with an isolated singularity at the origin, where the punctured unit ball $B_1 \backslash \{0\} \subset \mathbb{R}^n$ with $n \geq2 $, 
$\sg \in (0, 1)$ and $\frac{n}{n-2\sigma}  <  p  < \frac{n+2\sigma}{n-2\sigma}$. $(-\Delta)^\sg$ is the fractional Laplacian.

In \cite{Y-Z}, we  classify the isolated singularities of equation \eqref{Iso1} with $\frac{n}{n-2\sigma}  <  p  < \frac{n+2\sigma}{n-2\sigma}$. More precisely, let $u$ be a nonnegative solution of \eqref{Iso1},
then either  the singularity near 0 is removable, or there exist two positive constants $c_1$ and $c_2$ such that
\begin{equation}\label{Be00}
c_1 |x|^{-\frac{2\sg}{p-1}} \leq u(x) \leq c_2 |x|^{-\frac{2\sg}{p-1}}.
\end{equation}
Here we will prove the  exact asymptotic behavior of singular positive solutions in \eqref{Be00}. Our main result is the following 
\begin{theorem}\label{THM01}
Let $u$ be a nonnegative solution of \eqref{Iso1}. Assume 
$$\frac{n}{n-2\sg} < p < \frac{n+2\sg}{n-2\sg}.$$
Then either  the singularity near 0 is removable, or
\begin{equation}\label{Be01}
\lim_{|x|\rightarrow 0} |x|^{\frac{2\sigma}{p-1}} u(x) = A_{n,p,\sigma},
\end{equation}
where 
\begin{equation}\label{A}
A_{n,p,\sigma}=\left\{\Lambda\left(\frac{n-2\sg}{2} - \frac{2\sg}{p-1}\right)\right\}^{\frac{1}{p-1}}
\end{equation}
with
$$
\Lambda(\alpha)=2^{2\sg} \frac{\Gamma(\frac{n+2\sg+2\alpha}{4})   \Gamma(\frac{n+2\sg-2\alpha}{4})}{\Gamma(\frac{n-2\sg-2\alpha}{4})\Gamma(\frac{n-2\sg+2\alpha}{4})}.
$$
\end{theorem}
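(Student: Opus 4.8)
\emph{Strategy.} By the dichotomy of \cite{Y-Z} we may assume the singularity at $0$ is not removable, so that \eqref{Be00} holds; put $\mu:=\frac{2\sigma}{p-1}$, which by the hypothesis on $p$ satisfies $\frac{n-2\sigma}{2}<\mu<n-2\sigma$. I would run a blow--up analysis at the origin, governed by a Pohozaev--type monotonicity formula, and then identify every blow--up limit with the explicit homogeneous singular solution $A_{n,p,\sigma}\,|x|^{-\mu}$. First, pass to the Caffarelli--Silvestre extension $U=U(x,y)$ of $u$, so that $\operatorname{div}(y^{1-2\sigma}\nabla U)=0$, $U(\cdot,0)=u$, and $-\lim_{y\to 0^+}y^{1-2\sigma}\partial_y U=c_{n,\sigma}u^{p}$ on the boundary. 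Boundary regularity for this degenerate operator together with \eqref{Be00} shows that $\widehat U(X):=|X|^{\mu}U(X)$ is bounded above and below by positive constants, with bounded weighted gradient, on compact subsets of $\overline{\R^{n+1}_+}\setminus\{0\}$. Writing $\rho=|X|$, $s=-\ln\rho$ and $X=\rho\,\vartheta$ with $\vartheta\in S^{n}_+$, the punctured half--ball is conformal to the half--cylinder $(T,\infty)\times S^{n}_+$, and $w(s,\theta):=\rho^{\mu}u$, $W(s,\vartheta):=\rho^{\mu}U$ solve an \emph{autonomous} degenerate elliptic problem there, the Neumann condition becoming $-\partial_\nu^{1-2\sigma}W=c_{n,\sigma}w^{p}$; by \eqref{Be00}, $0<c_1\le w\le c_2$ on $(T,\infty)\times S^{n-1}$.

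\emph{Monotonicity formula.} Multiplying the extension equation by $\partial_s W$ and integrating over a slice $\{s\}\times S^{n}_+$, the divergence structure of the weighted operator and the Neumann condition yield a quantity of the form
\[
E(s)=\int_{S^{n}_+}\tfrac12(\vartheta_{n+1})^{1-2\sigma}\Big(|\partial_s W|^2-|\nabla_{S^{n}_+}W|^2-c_\sigma W^2\Big)\,d\vartheta+\frac{c_{n,\sigma}}{p+1}\int_{S^{n-1}}w^{p+1}\,d\theta
\]
(constants fixed by the conformal change) with $E'(s)=-\kappa\int_{S^{n}_+}(\vartheta_{n+1})^{1-2\sigma}|\partial_s W|^2\,d\vartheta\le0$. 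Hence $E$ is nonincreasing, and by the bounds above it is bounded, so $E(s)\to E_\infty$ as $s\to\infty$ and $\int_T^{\infty}\!\int_{S^{n}_+}(\vartheta_{n+1})^{1-2\sigma}|\partial_s W|^2\,d\vartheta\,ds<\infty$.

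\emph{Blow--up, rigidity, and conclusion.} For $\lambda\to0^+$ set $U_\lambda(X)=\lambda^{\mu}U(\lambda X)$. By the uniform bounds and elliptic estimates for the degenerate operator, along any $\lambda_j\to0$ a subsequence converges in $C^2_{\loc}(\overline{\R^{n+1}_+}\setminus\{0\})$ to a solution $U_0$ of the extension problem on $\R^{n+1}_+\setminus\{0\}$ whose trace $u_0$ solves $(-\Delta)^{\sigma}u_0=u_0^{p}$ in $\R^n\setminus\{0\}$ with $c_1|x|^{-\mu}\le u_0\le c_2|x|^{-\mu}$. In cylindrical variables $U_\lambda$ is a time--translate of $U$, so $E\equiv E_\infty$ along $U_0$; since $E'$ vanishes identically, $\partial_s W_0\equiv0$, i.e.\ $u_0(x)=|x|^{-\mu}\psi(x/|x|)$ is homogeneous with $0<c_1\le\psi\le c_2$. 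One then shows $\psi$ is constant, most naturally by a moving--plane/moving--sphere argument for $u_0$ on $\R^n\setminus\{0\}$ adapted to the nonlocal setting: sliding hyperplanes from every direction (the homogeneity forcing the symmetry hyperplanes through $0$) gives radial symmetry about the origin, hence $\psi\equiv\psi_0$; alternatively one derives a Pohozaev identity on $S^{n-1}$ for the limiting equation $\mathcal P_\sigma\psi=\psi^{p}$. Either way the subcriticality $p<\frac{n+2\sigma}{n-2\sigma}$ enters essentially to exclude nonconstant angular profiles. Substituting $u_0=\psi_0|x|^{-\mu}$ into the equation and using $(-\Delta)^{\sigma}|x|^{-\mu}=\Lambda_\sigma(\mu)|x|^{-\mu-2\sigma}$ with $\Lambda_\sigma(\mu)=2^{2\sigma}\frac{\Gamma(\frac{\mu+2\sigma}{2})\Gamma(\frac{n-\mu}{2})}{\Gamma(\frac{\mu}{2})\Gamma(\frac{n-\mu-2\sigma}{2})}$ gives $\psi_0^{\,p-1}=\Lambda_\sigma(\mu)$, and a direct $\Gamma$--function computation (matching the four arguments, using $\mu=\frac{2\sigma}{p-1}$) identifies $\Lambda_\sigma(\mu)$ with $\Lambda\!\big(\tfrac{n-2\sigma}{2}-\tfrac{2\sigma}{p-1}\big)$, so $\psi_0=A_{n,p,\sigma}$. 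As every subsequential blow--up limit of $|x|^{\mu}u$ at $0$ is this single constant and the rescaled family is precompact, $|x|^{\mu}u(x)\to A_{n,p,\sigma}$ as $|x|\to0$, which is \eqref{Be01}.

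\emph{Main difficulty.} The technical core is the monotonicity formula: choosing the correct scaling--invariant Pohozaev quantity for the degenerate extension, controlling the boundary terms carrying the weight $y^{1-2\sigma}$ and the nonlinearity $u^{p+1}$, and establishing the sign of $E'$. The second delicate point is the rigidity ``homogeneous $\Rightarrow$ constant,'' which genuinely needs the subcritical range of $p$ (not merely the two--sided bound) and is handled by a moving--plane argument in the nonlocal setting; the remaining steps (regularity/compactness for the extension, the $\Gamma$--function identity, and upgrading from subsequential to full convergence) are comparatively routine.
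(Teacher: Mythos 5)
Your proposal matches the paper's approach: Caffarelli--Silvestre extension, the Pohozaev-type monotonicity formula from \cite{Y-Z} (the paper keeps it in the radial variable $r$; your Emden--Fowler version is the same quantity after $s=-\log r$), a blow-up whose homogeneous limit is forced by constancy of the monotone energy across scales, cylindrical symmetry of the homogeneous limit, and the $\Gamma$-function identity $\Lambda_\sigma(\mu)=\Lambda\!\left(\tfrac{n-2\sigma}{2}-\tfrac{2\sigma}{p-1}\right)$ via Fall's computation. The one imprecision is the rigidity step: in the punctured domain a moving-plane (``sliding hyperplanes'') argument does not close by itself --- the paper instead runs the Li--Zhu moving-sphere method on the extension $U$ (Section 2, Theorem \ref{THM02}), with subcriticality entering through $p^*=n+2\sigma-p(n-2\sigma)>0$ in the Kelvin-transformed equation, and then invokes Lemma 2.1 of \cite{J-L-Xu} to deduce radial symmetry.
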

For the classical case $\sg=1$, Theorem \ref{THM01} has been proved in the pioneering  paper \cite{G-S} by  Gidas and Spruck.   We may also see another proof in classical paper \cite{C-G-S} by Caffarelli, Gidas and Spruck.  We remark that in these two proofs, the ODEs analysis play an essential role. However, the ODEs technique is a missing ingredient in our fractional case. Hence, we need some new ideas to overcome this key difficulty.  Here we shall use a monotonicity formula established in our recent paper \cite{Y-Z} and a blow up argument introduced  in Ghergu-Kim-Shahgholian \cite{G-K-S} to solve this problem.

We study the equation \eqref{Iso1} via the well known extension theorem for the fractional Laplacian $(-\Delta)^\sg$ established by Caffarelli-Silvestre \cite{C-S}. We use capital letters, such as $X=(x, t)\in \mathbb{R}^n \times \mathbb{R}_+$,  to  denote  points in $\mathbb{R}_+^{n+1}$.  We also denote $\mathcal{B}_R$ as  the ball in $\mathbb{R}^{n+1}$ with radius $R$ and center at the origin,  $\mathcal{B}_R^+$ as the upper half-ball $\mathcal{B}_R\cap \mathbb{R}_+^{n+1}$,  and $\partial^0 \mathcal{B}_R^+$ as the flat part of $\partial \mathcal{B}_R^+$  which is the ball $B_R$ in $\mathbb{R}^{n}$.  Then the problem \eqref{Iso1} is equivalent to the following extension problem 
\begin{equation}\label{Iso3}
\begin{cases}
-\textmd{div}(t^{1-2\sg}  \nabla U)=0~~~~~~~~~& \textmd{in} ~ \mathcal{B}_1^+,\\
\frac{\partial U}{\partial \nu^\sg}(x, 0)=U^p(x, 0)~~~~~~~~& \textmd{on} ~ \partial^0\mathcal{B}_1^+ \backslash  \{0\},
\end{cases}
\end{equation}
where $\frac{\partial U}{\partial \nu^\sg}(x, 0) := -\lim_{t\rightarrow 0^+}t^{1-2\sg} \partial_t U(x, t)$.  By \cite{C-S}, we only need to analyze the behavior of the traces
$$
u(x):=U(x, 0)
$$
of the nonnegative solutions $U(x,t)$ of \eqref{Iso3} near the origin, from which we can get the behavior of solutions of  \eqref{Iso1} near the origin.

In order to prove Theorem \ref{THM01}, we need to establish the following cylindrically symmetric result for the global equation that the origin is a non-removable isolated singularity. 
\begin{theorem}\label{THM02}
Let $U$ be a nonnegative solution of
\begin{equation}\label{Iso4}
\begin{cases}
-\textmd{div}(t^{1-2\sg}  \nabla U)=0~~~~~~~~~& \textmd{in} ~ \mathbb{R}^{n+1}_+,\\
\frac{\partial U}{\partial \nu^\sg}(x, 0)=U^p(x, 0)~~~~~~~~& \textmd{on} ~ \mathbb{R}^n \backslash  \{0\},
\end{cases}
\end{equation}
with $\frac{n}{n-2\sg} < p < \frac{n+2\sg}{n-2\sg}$. Assume that the origin 0 is a non-removable singularity. Then $U(x, t)=U(|x|, t)$. 
\end{theorem}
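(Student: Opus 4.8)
I would prove the cylindrical symmetry by the method of moving planes for the degenerate extension problem \eqref{Iso4}, together with the a priori estimates near the singularity from \cite{Y-Z}. Since \eqref{Iso4} is invariant under the rotations acting on $x$ (and trivially on $t$) and under the reflections $R_e\colon(x,t)\mapsto(x-2(x\cdot e)e,t)$ about the hyperplanes through the origin ($e\in\mathbb{R}^n$, $|e|=1$), it is enough to show that $U$ is invariant under $R_e$ for every such $e$; the $R_e$ generate $O(n)$ acting on $x$, and an $O(n)$-invariant function of $x$ is a function of $|x|$, i.e. $U(x,t)=U(|x|,t)$. So fix $e=e_1$; the goal is invariance under $x_1\mapsto -x_1$.

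\textbf{A priori input and configuration.} Since $0$ is non-removable, \cite{Y-Z} provides the two-sided bound $c_1|X|^{-2\sigma/(p-1)}\le U(X)\le c_2|X|^{-2\sigma/(p-1)}$ on $\overline{\mathbb{R}^{n+1}_+}\setminus\{0\}$ (the estimate near $0$ being carried to every scale by the invariance $U\mapsto R^{2\sigma/(p-1)}U(R\cdot)$ of \eqref{Iso4}, which preserves non-removability); in particular $U(X)\to 0$ as $|X|\to\infty$, and $U$ is positive and regular on $\overline{\mathbb{R}^{n+1}_+}\setminus\{0\}$. It is useful to record that the Kelvin transform $\overline U(X):=|X|^{-(n-2\sigma)}U(X/|X|^2)$ solves the same interior equation, satisfies $\partial\overline U/\partial\nu^\sigma(x,0)=|x|^{-\tau}\overline U(x,0)^p$ with $\tau=(n+2\sigma)-p(n-2\sigma)>0$, and, thanks to $p>\frac{n}{n-2\sigma}$, decays like $|X|^{-\beta}$ with $\beta=(n-2\sigma)-\frac{2\sigma}{p-1}>0$ at infinity --- this is the device giving room for the moving planes far out. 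For $\lambda>0$ set $\Sigma_\lambda=\{(x,t)\in\overline{\mathbb{R}^{n+1}_+}:x_1>\lambda\}$, $T_\lambda=\{x_1=\lambda\}$, let $X_\lambda=(2\lambda-x_1,x',t)$ be the reflection of $X=(x_1,x',t)$ across $T_\lambda$, and put $W_\lambda(X)=U(X_\lambda)-U(X)$ on $\Sigma_\lambda$. Then $-\mathrm{div}(t^{1-2\sigma}\nabla W_\lambda)=0$ in the interior of $\Sigma_\lambda$, $W_\lambda=0$ on $T_\lambda$, $W_\lambda\to 0$ at infinity, and on the flat boundary $\{(x,0):x_1>\lambda\}$ one has $\partial W_\lambda/\partial\nu^\sigma=c_\lambda W_\lambda$ with $0\le c_\lambda(x)=p\,\xi_\lambda(x)^{p-1}$, where $\xi_\lambda(x)$ lies between $U(x,0)$ and $U(2\lambda-x_1,x',0)$. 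The essential structural feature: for $\lambda>0$ the unique singularity of $U$ sits at $0\notin\overline{\Sigma_\lambda}$, whereas its reflected copy sits at $P_\lambda:=(2\lambda,0,\dots,0)\in\Sigma_\lambda$, where $W_\lambda\to+\infty$ --- the favourable sign.

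\textbf{Execution.} I would run the two usual steps. \emph{Start:} $W_\lambda\ge 0$ in $\Sigma_\lambda$ for all large $\lambda$. On $\{W_\lambda<0\}$ we have $\xi_\lambda\le U(\cdot,0)$, so $c_\lambda\le p\,U(\cdot,0)^{p-1}\le p\,c_2^{p-1}|x|^{-2\sigma}$ there, a scale-critical Hardy-type potential supported in $\{|x|>\lambda\}$; a maximum principle for $\mathrm{div}(t^{1-2\sigma}\nabla\cdot)$ with such a small, decaying boundary potential (using the decay of $U$ and $W_\lambda$ at infinity, and $\tau>0$, i.e. $p<\frac{n+2\sigma}{n-2\sigma}$) then forces $W_\lambda^-\equiv 0$. \emph{Continuation:} let $\lambda_0=\inf\{\mu>0: W_\lambda\ge 0\text{ in }\Sigma_\lambda\ \forall\,\lambda\ge\mu\}\ge 0$. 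By continuity $W_{\lambda_0}\ge 0$, and $W_{\lambda_0}\not\equiv 0$: otherwise $U$ would be symmetric about $T_{\lambda_0}$ and hence singular at $P_{\lambda_0}\ne 0$, impossible since $U$ is regular away from $0$. By the strong maximum principle and the Hopf lemma for the degenerate operator with the linear Neumann condition, $W_{\lambda_0}>0$ in the interior and on the flat boundary. If $\lambda_0>0$, then for $\lambda$ slightly below $\lambda_0$ I split $\Sigma_\lambda$ into: a fixed compact core (where $W_\lambda>0$ by continuity), a thin slab adjacent to $T_\lambda$ (maximum principle in narrow domains, $c_\lambda$ being bounded there), a neighbourhood of $P_\lambda$ (where $W_\lambda\to+\infty$), and the far region (treated as in the first step); this yields $W_\lambda\ge 0$, contradicting the definition of $\lambda_0$. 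Hence $\lambda_0=0$, so $U(-x_1,x',t)\ge U(x_1,x',t)$ for $x_1>0$; the same argument for $-e_1$ gives the reverse inequality, so $U$ is symmetric under $x_1\mapsto -x_1$. Repeating over all directions gives $U(x,t)=U(|x|,t)$.

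\textbf{Main obstacle.} The heart of the proof is the maximum-principle machinery for $\mathrm{div}(t^{1-2\sigma}\nabla\cdot)$ coupled with the linearized Neumann condition $\partial W/\partial\nu^\sigma=c_\lambda W$ when $c_\lambda$ decays only at the scale-critical rate $|x|^{-2\sigma}$: one needs versions of the strong maximum principle and the Hopf lemma robust enough --- a maximum principle with small, decaying potential on the unbounded far part and near $P_\lambda$, and a maximum principle in narrow domains near $T_\lambda$ --- for the procedure both to start and to continue. Getting this right on the unbounded domain (where the naive energy/trace argument is borderline) is the delicate point, and it is precisely here that $p>\frac{n}{n-2\sigma}$ (decay after the Kelvin transform) and $p<\frac{n+2\sigma}{n-2\sigma}$ (sign/size of the scale-critical weight $|x|^{-\tau}$) enter.
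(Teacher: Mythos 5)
Your proposal takes a genuinely different route: you use the method of moving planes (reflections across hyperplanes through the origin, iterated over all directions), whereas the paper uses the method of moving spheres (Kelvin transforms centered at every point $X=(x,0)$ with $x\neq 0$, at every radius $\lambda<|x|$), followed by an invocation of Lemma~2.1 of Jin--Li--Xu \cite{J-L-Xu} to pass from the moving-sphere inequality $u_{x,\lambda}\le u$ to radial symmetry about $0$. The paper's choice is not merely stylistic: it resolves, essentially for free, the very difficulty you flag as the ``main obstacle.'' In the moving-sphere comparison $U_{X,\lambda}(\xi)\le U(\xi)$ for $|\xi-X|\ge\lambda$, the transformed function decays like $|\xi|^{-(n-2\sigma)}$ at infinity, strictly faster than $U$'s decay $|\xi|^{-2\sigma/(p-1)}$ (since $p>\frac{n}{n-2\sigma}$ gives $\frac{2\sigma}{p-1}<n-2\sigma$), so the comparison is automatic far out; at the singularity $\xi=0$ one has $U\to\infty$, again favourable. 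No unbounded-domain maximum principle with a scale-critical potential is ever needed: Step~1 is a narrow-domain argument in a small annulus, and Step~2 is a clean barrier comparison with $\phi(\xi)=(\lambda_2/|\xi-X|)^{n-2\sigma}\inf_{\partial^+\mathcal{B}_{\lambda_2}^+}U$.

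By contrast, the \emph{Start} step of your moving-plane argument contains a genuine gap. You linearize to get $c_\lambda(x)\le p\,c_2^{p-1}|x|^{-2\sigma}$ on $\{W_\lambda<0\}\subset\{|x|>\lambda\}$ and then invoke ``a maximum principle with such a small, decaying boundary potential.'' The energy/narrow-domain mechanism needs $\|c_\lambda\|_{L^{n/2\sigma}}$ small on the negative set, but $\int_{\{|x|>\lambda\}}\bigl(|x|^{-2\sigma}\bigr)^{n/2\sigma}\,dx=\int_{\{|x|>\lambda\}}|x|^{-n}\,dx$ is logarithmically divergent, independently of $\lambda$; and the trace Hardy inequality route requires $p\,c_2^{p-1}$ below the sharp Hardy constant, which is not available since $c_2$ in \eqref{Be00} is not at your disposal. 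The pointwise route fails too: at a negative boundary minimum of $W_\lambda$, Hopf gives $\partial W_\lambda/\partial\nu^\sigma<0$, which is \emph{consistent} with $\partial W_\lambda/\partial\nu^\sigma=c_\lambda W_\lambda<0$ (the potential $c_\lambda\ge 0$ has the unfavourable sign), so no contradiction. Passing first to the Kelvin transform at $0$ does not repair this, because the linearized potential $|x|^{-\tau}\overline U^{\,p-1}$ is still exactly of order $|x|^{-2\sigma}$ by scale invariance. You would also need the negative set $\{W_\lambda<0\}$ to be bounded, but the two-sided bound \eqref{Be00} only shows it is disjoint from an Apollonius-type ball around the reflected singularity, not that it is bounded. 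So, while the outline (reflect across all hyperplanes through $0$, start far, slide to $\lambda_0=0$, use the reflected singularity as the obstruction) is sound in spirit, the start and continuation steps require a comparison at infinity that $W_\lambda=U(X_\lambda)-U(X)$ does not furnish --- precisely the comparison that the Kelvin transform inside the moving-sphere set-up provides for free.

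Finally, a smaller structural difference: after running the comparison, the paper does not iterate over directions. It applies the calculus lemma of Jin--Li--Xu directly to the trace $u(x)=U(x,0)$ to conclude radial symmetry of $u$ about $0$, and hence $U(x,t)=U(|x|,t)$. Your route of iterating the plane reflections over all $e\in\mathbb{S}^{n-1}$ and then invoking $O(n)$-invariance is also legitimate, but only once the reflection step itself is secured.
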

The paper is organized as follow. In Section 2, we prove that singular positive solutions of \eqref{Iso4} are cylindrically symmetric via the method of moving sphere introduced by Li and Zhu \cite{L-Z}. We mainly follow the argument in \cite{C-J-S-X} where the cylindrical symmetry of singular positive solutions of \eqref{Iso4} with $p=\frac{n+2\sg}{n-2\sg}$ was proved.  Section 3 is devoted to the proof  of Theorem \ref{THM01}. We will see that our proof is very different from those in papers \cite{G-S,C-G-S}. In particular, the monotonicity formula established in \cite{Y-Z} and the blow up argument introduced in \cite{G-K-S} are two essential tools. 

\section{Cylindrical Symmetry}
For each $\bar{x} \in \mathbb{R}^n$ and $\lambda >0$, we denote $\overline{X}= (\bar{x}, 0)$ and define the Kelvin transformation of $U$ with respect to the ball $\mathcal{B}_\lambda(\overline{X})$ as follow 
\begin{equation}\label{CS00}
U_{\overline{X}, \lambda} (\xi):=\left( \frac{\lambda}{|\xi - \overline{X}|} \right)^{n-2\sg} U \left(\overline{X} + \frac{\lambda^2(\xi- \overline{X})}{|\xi - \overline{X}|^2}\right). 
\end{equation}
If $U$ is a solution of \eqref{Iso4}, then $U_{\overline{X}, \lambda}$ satisfies
\begin{equation}\label{CS01}
\begin{cases}
-\textmd{div}(t^{1-2\sg}  \nabla U_{\overline{X}, \lambda})=0~~~~~~~~~& \textmd{in} ~ \mathbb{R}^{n+1}_+,\\
\frac{\partial U_{\overline{X}, \lambda}}{\partial \nu^\sg}(y, 0)=\left(\frac{\lambda}{|y - \bar{x}|}\right)^{p^*}U_{\overline{X}, \lambda}^p(y, 0)~~~~~~~~& \textmd{on} ~ \mathbb{R}^n \backslash  \{\bar{x}, y_0\},
\end{cases}
\end{equation}
where $y_0=\bar{x} - \frac{\lambda^2 \bar{x}}{|\bar{x}|^2}$ and $p^*= n+2\sg -p(n-2\sg)>0$. 

\vskip0.20in

\noindent {\it Proof of Theorem \ref{THM02}.}  
Since the origin 0 is a non-removable singularity, by Corollary 3.1 in \cite{Y-Z}, 
$$
\lim_{|\xi| \rightarrow 0} U(\xi) =+\infty. 
$$
{\it Claim 1:} For all $x \in \mathbb{R}^n \backslash\{0\}$ there exists $\lambda_3(x) \in (0, |x|)$ such that for all $0 < \lambda < \lambda_3(x)$ we have 
\begin{equation}\label{CS02}
U_{X,\lambda}(\xi) \leq U(\xi)~~~~~~ \forall ~ |\xi -  X|\geq \lambda, ~~ \xi\neq 0,
\end{equation}
where $X=(x, 0)$ and $U_{X,\lambda}$ is the Kelvin transformation of $U$ with respect to $\mathcal{B}_\lambda(X)$.  The proof of Claim 1 consists of two steps.

{\it Step 1.} We prove that there exist $0 < \lambda_1 < \lambda_2 < |x|$ such that
$$
U_{X,\lambda}(\xi) \leq U(\xi),~~~~~~~~\forall ~ 0< \lambda < \lambda_1,~~~ \lambda < |\xi - X| < \lambda_2.
$$
For every $0 < \lambda < \lambda_1 < \lambda_2$, $\xi \in \partial^+\mathcal{B}_{\lambda_2}^+(X)$, we have $X + \frac{\lambda^2(\xi - X)}{|\xi - X|^2} \in \mathcal{B}_{\lambda_2}^+(X)$. Hence we can choose $\lambda_1=\lambda_1(\lambda_2)$ small such that
$$
\aligned
U_{X,\lambda}(\xi) & =\left( \frac{\lambda}{|\xi - X|} \right)^{n-2\sg} U \left(X + \frac{\lambda^2(\xi- X)}{|\xi - X|^2}\right) \\
& \leq \left(\frac{\lambda_1}{\lambda_2}\right)^{n-2\sg}\sup_{\overline{\mathcal{B}_{\lambda_2}^+(X)}}U \leq \inf_{\partial^+\mathcal{B}_{\lambda_2}^+(X)}U \leq U(\xi).
\endaligned
$$
Thus we have 
$$
U_{X,\lambda}(\xi) \leq U(\xi) ~~~~~~~~ \textmd{on} ~ \partial^+( \mathcal{B}_{\lambda_2}^+(X) \backslash \mathcal{B}_{\lambda}^+(X) )
$$
for all $\lambda_2 >0$ and $0 < \lambda < \lambda_1(\lambda_2)$.

Now we show that $U_{X,\lambda}(\xi) \leq U(\xi)$ in $\mathcal{B}_{\lambda_2}^+(X) \backslash \mathcal{B}_{\lambda}^+(X)$ if $\lambda_2$ is small and $0 < \lambda < \lambda_1(\lambda_2)$.  Because $U_{X,\lambda}$ satisfies \eqref{CS01}, we have
\begin{equation}\label{CS03}
\begin{cases}
-\textmd{div}(t^{1-2\sg}  \nabla ( U_{X, \lambda} - U))=0  & \textmd{in} ~ \mathcal{B}_{\lambda_2}^+(X) \backslash \overline{\mathcal{B}_{\lambda}^+(X)},\\
\frac{\partial }{\partial \nu^\sg}(U_{X, \lambda}- U) (y, 0)=\left(\frac{\lambda}{|y - x|}\right)^{p^*}U_{X, \lambda}^p(y, 0) - U^p(y, 0)  & \textmd{on} ~ \partial^0(\mathcal{B}_{\lambda_2}^+(X) \backslash \overline{\mathcal{B}_{\lambda}^+(X)}). 
\end{cases}
\end{equation}
Let $(U_{X, \lambda} - U)^+:=\max(0, U_{X, \lambda} - U)$ which equals to 0 on $\partial^+( \mathcal{B}_{\lambda_2}^+(X) \backslash \mathcal{B}_{\lambda}^+(X) )$. Multiplying \eqref{CS03} by $(U_{X, \lambda} - U)^+$ and integrating by parts,  and using the narrow domain technique from \cite{B-N}, we obtain
$$
\aligned
& \int_{\mathcal{B}_{\lambda_2}^+(X) \backslash \mathcal{B}_{\lambda}^+(X)} t^{1-2\sg} |\nabla(U_{X, \lambda} - U)^+|^2 \\
&~~ = \int_{B_{\lambda_2}(X) \backslash B_\lambda(X)} \left[\left(\frac{\lambda}{|y - x|}\right)^{p^*}U_{X, \lambda}^p(y, 0) - U^p(y, 0) \right] (U_{X, \lambda} - U)^+ \\
&~~ \leq \int_{B_{\lambda_2}(X) \backslash B_\lambda(X)} (U_{X, \lambda}^p(y, 0) - U^p(y, 0))  (U_{X, \lambda} - U)^+ \\
&~~ \leq C \int_{B_{\lambda_2}(X) \backslash B_\lambda(X)} \left( (U_{X, \lambda} - U)^+ \right)^2 U_{X, \lambda}^{p-1}(y, 0) \\
&~~ \leq C  \left( \int_{B_{\lambda_2}(X) \backslash B_\lambda(X)} \left( (U_{X, \lambda} - U)^+ \right)^{\frac{2n}{n-2\sg}} \right)^{\frac{n-2\sg}{n}}
\left( \int_{B_{\lambda_2}(X) \backslash B_\lambda(X)} U_{X, \lambda}^{\frac{(p-1)n}{2\sg}} \right)^{\frac{2\sg}{n}} \\
&~~ \leq C \left( \int_{\mathcal{B}_{\lambda_2}^+(X) \backslash \mathcal{B}_{\lambda}^+(X)} t^{1-2\sg} |\nabla(U_{X, \lambda} - U)^+|^2 \right)
\left( \int_{B_{\lambda_2}(X)} U^{\frac{(p-1)n}{2\sg}} \right)^{\frac{2\sg}{n}}
\endaligned
$$
We can choose $\lambda_2$ small such that
$$
C \left( \int_{B_{\lambda_2}(X)} U^{\frac{(p-1)n}{2\sg}} \right)^{\frac{2\sg}{n}} \leq \frac{1}{2}.
$$
Then we have
$$
\nabla(U_{X, \lambda} - U)^+ =0 ~~~~~ \textmd{in} ~ \mathcal{B}_{\lambda_2}^+(X) \backslash \mathcal{B}_{\lambda}^+(X). 
$$
Since
$$
(U_{X, \lambda} - U)^+ =0 ~~~~~~~\textmd{on}~  \partial^+( \mathcal{B}_{\lambda_2}^+(X) \backslash \mathcal{B}_{\lambda}^+(X) ), 
$$
we obatin 
$$
(U_{X, \lambda} - U)^+ =0~~~~~~\textmd{in} ~ \mathcal{B}_{\lambda_2}^+(X) \backslash \mathcal{B}_{\lambda}^+(X). 
$$
Hence, we have
$$
U_{X, \lambda} \leq  U~~~~~~\textmd{in} ~ \mathcal{B}_{\lambda_2}^+(X) \backslash \mathcal{B}_{\lambda}^+(X)
$$
for $0 < \lambda < \lambda_1$.

{\it Step 2.}  We prove that there exists $\lambda_3(x) \in (0, \lambda_1)$ such that for each $0 < \lambda < \lambda_3(x)$, 
$$
U_{X, \lambda}(\xi) \leq  U(\xi),~~~~~~~~\forall ~ |\xi - X| > \lambda_2,~~ \xi \neq 0. 
$$
To prove this step, we let 
$$
\phi(\xi)=\left(\frac{\lambda_2}{|\xi - X|}\right)^{n-2\sg} \inf_{\partial^+ \mathcal{B}_{\lambda_2}^+(X)} U, 
$$
then $\phi$ satisfies
$$
\begin{cases}
-\textmd{div}(t^{1-2\sg}  \nabla \phi)=0~~~~~~~~~& \textmd{in} ~ \mathbb{R}^{n+1}_+ \backslash \mathcal{B}_{\lambda_2}^+(X),\\
\frac{\partial \phi}{\partial \nu^\sg}(x, 0)=0~~~~~~~~& \textmd{on} ~ \mathbb{R}^n \backslash  B_{\lambda_2}(X),
\end{cases}
$$
and $\phi(\xi) \leq U(\xi)$ on $\partial^+ \mathcal{B}_{\lambda_2}(X)$. By the maximum principle, we have
$$
U(\xi) \geq \left(\frac{\lambda_2}{|\xi - X|}\right)^{n-2\sg} \inf_{\partial^+ \mathcal{B}_{\lambda_2}^+(X)} U~~~~~~~\forall ~ |\xi - X| \geq \lambda_2,~~ \xi \neq 0. 
$$
Let 
$$
\lambda_3(x):= \min\left\{\lambda_1, \lambda_2\left( \inf_{\partial^+ \mathcal{B}_{\lambda_2}^+(X)} U / \sup_{\mathcal{B}_{\lambda_2}^+(X)} U\right)^{\frac{1}{n-2\sg}} \right\}. 
$$
Then for any $0 < \lambda < \lambda_3(x)$, $|\xi - X| \geq \lambda_2$ and $\xi \neq 0$, we have
$$
\aligned
U_{X,\lambda}(\xi)  & =\left( \frac{\lambda}{|\xi - X|} \right)^{n-2\sg} U \left(X + \frac{\lambda^2(\xi- X)}{|\xi - X|^2}\right) \\
& \leq \left( \frac{\lambda_3}{|\xi - X|} \right)^{n-2\sg} \sup_{\mathcal{B}_{\lambda_2}^+(X)} U \\
& \leq \left( \frac{\lambda_2}{|\xi - X|} \right)^{n-2\sg} \inf_{\partial^+ \mathcal{B}_{\lambda_2}^+(X)} U \leq U(\xi). 
\endaligned
$$
The proof of Claim 1 is completed.

Now, we define
$$
\bar{\lambda}(x):= \sup\{ 0 < \mu \leq |x|  \big| U_{X,\lambda}(\xi) \leq U(\xi), ~~ \forall ~ |\xi -  X|\geq \lambda, ~~ \xi\neq 0, ~~\forall ~0< \lambda < \mu \}.
$$
By Claim 1, $\bar{\lambda}(x)$ is well defined. 

{\it Claim 2.} $\bar{\lambda}(x) = |x|$.

Suppose by contradiction that $\bar{\lambda}(x) < |x|$ for some $x\neq 0$.  Since the origin 0 is not removable, by the strong maximum principle, we obtain
$$
U(\xi) > U_{X,\lambda}(\xi) ~~~~~\forall ~ |\xi -  X| > \lambda, ~~ \xi\neq 0. 
$$
Moreover, we have
$$
\liminf_{\xi \rightarrow 0}(U(\xi)- U_{X,\lambda}(\xi)) >0. 
$$
We can using the narrow domain technique as above, see also the proof of Theorem 1.8 in \cite{J-L-X}. Then there exists $\epsilon_1 >0$ such that for all $\bar{\lambda}(x) < \lambda < \bar{\lambda}(x) +\epsilon_1$ we have 
$$
U_{X,\lambda}(\xi) \leq U(\xi) , ~~~~~~~ \forall ~|\xi - X| \geq \lambda,~~ \xi \neq 0, 
$$
which contradicts with the definition of $\bar{\lambda}(x)$. This proves Claim 2.

Therefore, we obatin 
$$
U_{X,\lambda}(\xi) \leq U(\xi)~~~~~~ \forall ~ |\xi -  X|\geq \lambda, ~~ \xi\neq 0,~~\forall ~ 0< \lambda < |x|. 
$$
In particular,  we have
$$
u_{x,\lambda}(y) \leq u(y)~~~~~~ \forall ~ |y- x |\geq \lambda, ~~ y\neq 0,~~\forall ~ 0< \lambda < |x|,
$$
where $u_{x,\lambda}$ is the Kelvin transformation of $u$ with respect to the ball $B_{\lambda} (x)$. Thus, we deduce from Lemma 2.1 in \cite{J-L-Xu} that  $u$ is radially symmetric about the origin 0.   The proof of Theorem \ref{THM02} is complete. 
\hfill$\square$

\section{Exact Asymptotic Behavior}
In this section, we shall prove Theorem \ref{THM01}. First, we recall the energy functional in \cite{Y-Z} 
$$
\aligned
E(r;U):= &  r^{2\frac{(p+1)\sg}{p-1} - n }\left[ r \int_{\partial^+\mathcal{B}_r^+} t^{1-2\sg} | \frac{\partial U}{\partial \nu} |^2 + \frac{2\sg}{p-1} \int_{\partial^+\mathcal{B}_r^+} t^{1-2\sg}\frac{\partial U}{\partial \nu} U\right] \\
& + \frac{1}{2}\frac{2\sg}{p-1}\left(\frac{4\sg}{p-1} - (n-2\sg)\right) r^{2\frac{(p+1)\sg}{p-1} - n -1}  \int_{\partial^+\mathcal{B}_r^+} t^{1-2\sg} U^2  \\
& -  r^{2\frac{(p+1)\sg}{p-1} - n+1 }  \left[ \frac{1}{2} \int_{\partial^+\mathcal{B}_r^+} t^{1-2\sg} |\nabla U|^2 -\frac{1}{p+1}\int_{\partial B_r} u^{p+1} \right].
\endaligned
$$
We define the scaling function
$$
U^\lambda(X):=\lambda^{\frac{2\sg}{p-1}} U(\lambda X),~~~~~~\lambda >0. 
$$
Then we easily see that the equation \eqref{Iso3} is invariant  under this scaling. More precisely, if $U$ is a solution of \eqref{Iso3} in $\mathcal{B}_R^+ \backslash \{0\}$, then $U^\lambda$ is a solution of \eqref{Iso3} in $\mathcal{B}_{R/\lambda}^+ \backslash \{0\}$. Moreover, we easily check that $E$ satisfies the following scaling relation
\begin{equation}\label{EN01}
E(\lambda s, U)= E(s, U^\lambda), 
\end{equation}
for $\lambda, s>0$.  We remark that this scaling invariance of $E$ plays a key role in the proof of Proposition 3.3 in \cite{Y-Z}. By Proposition 3.2 in \cite{Y-Z} (or more precisely, and its proof there), we have the following monotonicity formula.  
\begin{proposition}\label{EAB302}
Let $U$ be a nonnegative solution of \eqref{Iso3} in $\mathcal{B}_R^+ \backslash \{0\}$ with $1 < p < \frac{n+2\sg}{n-2\sg}$. Then, $E(r; U)$ is non-decreasing in $r \in (0, R)$. Furthermore,
$$
\frac{d}{dr} E(r; U)=J_1 r^{2\frac{(p+1)\sg}{p-1} -n} \int_{\partial^+\mathcal{B}_r^+} t^{1-2\sg} \left(\frac{\partial U}{\partial \nu} + \frac{2\sg}{p-1} \frac{U}{r}\right)^2,
$$
where $J_1=\frac{4\sg}{p-1} - (n-2\sg) >0$ since $1 < p < \frac{n+2\sg}{n-2\sg}$.  In particular, if $E(r; U) $ is constant for $r\in (R_1, R_2)$, then $U$ is homogeneous of degree $-\frac{2\sg}{p-1}$ in $\mathcal{B}_{R_2}^+ \backslash \mathcal{B}_{R_1}^+$, i.e.
$$
U(X)=|X|^{-\frac{2\sg}{p-1}}U \left( \frac{X}{|X|} \right). 
$$
\end{proposition}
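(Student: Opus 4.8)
The plan is to establish the differentiation formula first, and then read off monotonicity and the rigidity statement from it. By the scaling relation \eqref{EN01}, for any fixed $\rho\in(0,R)$ we have $E(r;U)=E(r/\rho;U^{\rho})$ for $r$ near $\rho$, with $U^{\rho}$ again a nonnegative solution of \eqref{Iso3}, whence $\frac{d}{dr}E(r;U)\big|_{r=\rho}=\rho^{-1}\,\frac{d}{dt}E(t;U^{\rho})\big|_{t=1}$; it therefore suffices to compute $\frac{d}{dt}E(t;V)\big|_{t=1}$ for an arbitrary such solution $V$. (One may equally well carry all powers of $r$ through the computation; scaling merely tidies the bookkeeping.) Throughout, $U$ is smooth in the interior and the boundary integrals in $E$ and their $r$-derivatives are well defined for $r\in(0,R)$, by the regularity theory for \eqref{Iso3} away from the origin.

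Write $\alpha=\frac{2\sigma}{p-1}$, so that $2\frac{(p+1)\sigma}{p-1}-n=2\sigma+2\alpha-n$ and $J_1=2\alpha-(n-2\sigma)$. On the half-sphere $S_r^{+}:=\partial^{+}\mathcal{B}_r^{+}$ the outward unit normal of $\mathcal{B}_r^{+}$ is radial, so $\frac{\partial U}{\partial\nu}=\partial_\rho U$ there, and the edge $\partial S_r^{+}$ is the $(n-1)$-sphere $\partial B_r\subset\{t=0\}$. The computation uses three ingredients: (i) the surface-differentiation identities $\frac{d}{dr}\int_{S_r^{+}}f=\int_{S_r^{+}}\!\big(\partial_\rho f+\frac nr f\big)$ and $\frac{d}{dr}\int_{\partial B_r}g=\int_{\partial B_r}\!\big(\partial_\rho g+\frac{n-1}{r}g\big)$; (ii) the equation $\mathrm{div}(t^{1-2\sigma}\nabla U)=0$, which in polar coordinates $X=\rho\,\omega$ expresses the weighted spherical Laplacian of $U$ in terms of $\partial_\rho U$, $\partial_{\rho\rho}U$ and lower-order terms; and (iii) the Neumann condition $\frac{\partial U}{\partial\nu^{\sigma}}(x,0)=U^{p}(x,0)$, which is precisely what converts the edge terms on $\partial B_r$ produced by integration by parts on $S_r^{+}$ into the $\int_{\partial B_r}u^{p+1}$ contribution.

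Differentiating $E(r;U)$ term by term via (i), eliminating the second-order radial and spherical derivatives by means of (ii), integrating by parts on $S_r^{+}$ — where the degenerate weight forces one to work first on an annulus $\mathcal{B}_r^{+}\setminus\mathcal{B}_\varepsilon^{+}$ or on $\{t>\varepsilon\}$ and then pass to the limit, using the local regularity of $U$ away from $0$ and the local integrability of $t^{1-2\sigma}$ — and inserting (iii) for the edge terms, one finds that every cross term cancels and
\[
\frac{d}{dr}E(r;U)=J_1\,r^{\,2\frac{(p+1)\sigma}{p-1}-n}\int_{\partial^{+}\mathcal{B}_r^{+}}t^{1-2\sigma}\Big(\frac{\partial U}{\partial\nu}+\frac{2\sigma}{p-1}\frac{U}{r}\Big)^{2}.
\]
Since $p<\frac{n+2\sigma}{n-2\sigma}$ is equivalent to $(p-1)(n-2\sigma)<4\sigma$, i.e. $n-2\sigma<\frac{4\sigma}{p-1}=2\alpha$, we get $J_1>0$; hence the right-hand side is nonnegative, and $E(\,\cdot\,;U)$ is non-decreasing on $(0,R)$.

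For the rigidity assertion, if $E(r;U)$ is constant on $(R_1,R_2)$ then $\frac{d}{dr}E\equiv0$ there, so, $J_1>0$ and $t^{1-2\sigma}>0$ forcing the integrand to vanish, we get $\partial_\rho U+\frac{\alpha}{\rho}U\equiv0$, i.e. $\partial_\rho\!\big(\rho^{\alpha}U(\rho\omega)\big)\equiv0$, for a.e.\ $\rho\in(R_1,R_2)$ and every direction $\omega$, and then for all such $\rho$ by continuity; thus $\rho^{\alpha}U(\rho\omega)$ is independent of $\rho$, which is exactly $U(X)=|X|^{-2\sigma/(p-1)}U(X/|X|)$ on $\mathcal{B}_{R_2}^{+}\setminus\mathcal{B}_{R_1}^{+}$. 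The main obstacle is the weighted Pohozaev bookkeeping in the middle step: carrying out the term-by-term differentiation and the integration by parts on the half-sphere against the degenerate weight $t^{1-2\sigma}$, tracking the edge terms on $\partial B_r$ and invoking the boundary condition at exactly the right place, so that the numerous cross terms collapse to the single perfect square with the stated constant $J_1$ — this is the computation carried out in the proof of Proposition 3.2 of \cite{Y-Z}. (Equivalently, the Emden--Fowler change of variables $U(X)=|X|^{-\alpha}w(-\ln|X|,X/|X|)$ turns \eqref{Iso3} into a problem autonomous in $s=-\ln|X|$ and $E$ into the associated Hamiltonian-type quantity, whose $s$-derivative is manifestly a square.)
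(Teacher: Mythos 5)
Your proposal is correct and takes essentially the same route as the paper: the paper simply invokes Proposition 3.2 of \cite{Y-Z} (and its proof) for the differentiation identity, and you likewise defer the term-by-term weighted Pohozaev computation to that reference while accurately outlining its ingredients (surface differentiation, the degenerate-elliptic equation, and the Neumann condition supplying the $\int_{\partial B_r} u^{p+1}$ edge term) and then reading off $J_1>0$, monotonicity, and the rigidity conclusion from the perfect-square form. Your explicit derivation of homogeneity from $\partial_\rho U+\frac{\alpha}{\rho}U\equiv 0$, and your scaling reduction to $r=1$, are correct and tidy additions that the paper leaves implicit.
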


\begin{proposition}\label{EAB303}
Let $U$ be a nonnegative solution of \eqref{Iso3} in $\mathcal{B}_1^+ \backslash \{0\}$ with $1 < p < \frac{n+2\sg}{n-2\sg}$. Then $E(r; U)$ is uniformly bounded for $0< r < \frac{1}{2}$. Furthermore, the limit 
$$
E(0^+; U):=\lim_{r\rightarrow 0^+} E(r; U)
$$
exists. 
\end{proposition}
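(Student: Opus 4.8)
The plan is to combine the monotonicity already established in Proposition~\ref{EAB302} with a rescaling argument built on the scaling identity~\eqref{EN01} and the a priori bound~\eqref{Be00}. Since $E(r;U)$ is non-decreasing on $(0,\tfrac12)$, the limit $E(0^+;U)$ automatically exists in $[-\infty,\,E(\tfrac12;U)]$; hence the only substantive point is the \emph{uniform lower bound}
$$
E(r;U)\ \ge\ -C\qquad\text{for all }\ 0<r<\tfrac12,
$$
i.e.\ the family $\{E(r;U)\}_{0<r<1/2}$ is bounded. Once this is known, $E(r;U)$ is a bounded monotone function of $r$, so $E(0^+;U)=\lim_{r\to0^+}E(r;U)$ exists and is finite.

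To produce the uniform bound I would rescale. Set $U^r(X):=r^{\frac{2\sg}{p-1}}U(rX)$; for $0<r<\tfrac12$ this is a nonnegative solution of~\eqref{Iso3} in $\mathcal B_{1/r}^+\backslash\{0\}\supset\mathcal B_2^+\backslash\{0\}$, and by~\eqref{EN01} (taking $s=1$, $\lambda=r$) one has $E(r;U)=E(1;U^r)$. Thus it suffices to bound $E(1;U^r)$ by a constant independent of $r$. On the fixed annular region $\mathcal A:=\mathcal B_2^+\backslash\mathcal B_{1/4}^+$, which stays away from the singular point $0$, the scale-invariant estimate~\eqref{Be00} — in its equivalent form $U(X)\le C|X|^{-\frac{2\sg}{p-1}}$ for the extension, available from \cite{Y-Z} — transfers to $U^r$ and gives $0<U^r\le C$ on $\mathcal A$ with $C$ independent of $r$; hence the conormal datum $(U^r)^p$ is uniformly bounded on $\partial^0\mathcal A$. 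Inserting this into the interior and boundary regularity theory for the degenerate equation $-\operatorname{div}(t^{1-2\sg}\nabla U^r)=0$ with bounded conormal data (De Giorgi--Nash--Moser and Schauder-type estimates for the Muckenhoupt weight $t^{1-2\sg}$, as in \cite{C-S,J-L-X}) yields, uniformly in $r$,
$$
U^r\le C,\qquad |\nabla_x U^r|\le C,\qquad |\partial_t U^r(x,t)|\le C\,t^{2\sg-1}\qquad\text{on }\ \mathcal B_{3/2}^+\backslash\mathcal B_{1/2}^+ .
$$

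Now I would simply substitute these bounds into the explicit formula for $E(1;U^r)$. The sphere $\partial^+\mathcal B_1^+$ lies inside $\mathcal B_{3/2}^+\backslash\mathcal B_{1/2}^+$, and on it the radial normal is $\nu=(x,t)$, so $\partial_\nu U^r=x\cdot\nabla_x U^r+t\,\partial_t U^r$ is bounded (the term $t\,\partial_t U^r=O(t^{2\sg})$ is in fact small near the equator). Therefore $t^{1-2\sg}|\partial_\nu U^r|^2$, $t^{1-2\sg}\,\partial_\nu U^r\,U^r$ and $t^{1-2\sg}(U^r)^2$ are all $\le C\,t^{1-2\sg}$, while $t^{1-2\sg}|\nabla U^r|^2\le C\,(t^{1-2\sg}+t^{2\sg-1})$; since $\sg\in(0,1)$, both $t^{1-2\sg}$ and $t^{2\sg-1}$ are integrable over $t\in(0,1)$, so every surface integral over $\partial^+\mathcal B_1^+$ in the definition of $E$ is bounded uniformly in $r$. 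Likewise $\int_{\partial B_1}(u^r)^{p+1}\le C$ because $u^r=U^r(\cdot,0)\le C$ on $|y|=1$. Hence $|E(r;U)|=|E(1;U^r)|\le C$ for all $0<r<\tfrac12$, and, together with Proposition~\ref{EAB302}, $E(r;U)$ is a bounded non-decreasing function on $(0,\tfrac12)$; this gives the existence of $E(0^+;U)$.

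The main obstacle is the middle step: transferring the pointwise control~\eqref{Be00} from the trace $u$ to the full extension $U$ and its gradient on the annulus, i.e.\ producing the uniform gradient estimate up to the equator $\{|x|=1,\ t=0\}$ of $\partial^+\mathcal B_1^+$. This requires the boundary regularity theory for the weighted degenerate equation, and one must note that the conormal condition forces $\partial_t U^r$ to grow like $t^{2\sg-1}$ near $t=0$ when $\sg<\tfrac12$ — a growth that is harmless precisely because it is paired with the weight $t^{1-2\sg}$ in the energy, which is exactly what keeps the boundary integrals of $E$ finite and uniformly bounded.
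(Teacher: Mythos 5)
Your proposal is correct and follows essentially the same route as the paper: rescale $U^r(X)=r^{\frac{2\sg}{p-1}}U(rX)$, invoke the a priori bound from \cite{Y-Z} (the paper cites Proposition 3.1 there) to get uniform $L^\infty$ control on the annulus, upgrade to the uniform derivative estimates $|\nabla_x U^r|\le C$ and $|t^{1-2\sg}\partial_t U^r|\le C$ via the weighted regularity theory of \cite{J-L-X} (Proposition 2.19), and then bound each boundary integral in $E$ using the integrability of $t^{1-2\sg}$ and $t^{2\sg-1}$, concluding via monotonicity. The only cosmetic difference is that you exploit the scaling relation $E(r;U)=E(1;U^r)$ directly, whereas the paper converts the rescaled estimates back to weighted pointwise bounds on $U$ and plugs those into $E(r;U)$ at scale $r$; the two presentations are equivalent.
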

\begin{proof}
Let
$$
V(X)=r^{\frac{2\sg}{p-1}}U(rX)
$$
for any $r\in (0, \frac{1}{2})$ and $\frac{1}{2} \leq |X|\leq 2$. Then $V$ satisfies
$$
\begin{cases}
-\textmd{div}(t^{1-2\sg}  \nabla V)=0~~~~~~~~~& \textmd{in} ~\mathcal{B}_2 \backslash \overline{\mathcal{B}}_{1/2},\\
\frac{\partial V}{\partial \nu^\sg}(x, 0)=v^p(x)~~~~~~~~& \textmd{on} ~B_2 \backslash \overline{B}_{1/2},\\
\end{cases}
$$
where $v(x)=V(x, 0)$. It follows from Proposition 3.1 in \cite{Y-Z} that 
$$
|V(x)| \leq C ~~~~~~~~ \textmd{for} ~ \textmd{all} ~ \frac{1}{2} \leq |X| \leq 2,
$$
where the constant $C$ depend only on $n, p, \sg$. By Proposition 2.19 in \cite{J-L-X}, we have
$$
\sup_{\frac{3}{4} \leq |X| \leq \frac{3}{2}} |\nabla_x V| + \sup_{\frac{3}{4} \leq |X| \leq \frac{3}{2}} | t^{1-2\sg}\partial_t V| \leq C,
$$
where the constant $C$  depend only on $n, p, \sg$. Hence, there exists $C>0$ depend only on $n, p, \sg$, such that
$$
|\nabla_x U (X)|\leq C |X|^{-\frac{2\sg}{p-1} -1}~~~~~~~~ \textmd{in} ~ \mathcal{B}_{1/2}^+ \backslash \{0\}
$$
and 
$$
|t^{1-2\sg}\partial_t U(X)|\leq C |X|^{-\frac{2\sg}{p-1} -2\sg}~~~~~~~~ \textmd{in} ~ \mathcal{B}_{1/2}^+ \backslash \{0\}. 
$$
Thus, we can estimate
$$
\aligned
r^{2\frac{(p+1)\sg}{p-1}-n+1} \int_{\partial^+ \mathcal{B}_r^+} t^{1-2\sg} |\nabla U| \leq & C r^{2\frac{(p+1)\sg}{p-1}-n+1} \bigg( r^{-\frac{4\sg}{p-1}-2} \int_{\partial^+ \mathcal{B}_r^+} t^{1-2\sg} \\
& + r^{-\frac{4\sg}{p-1}-4\sg} \int_{\partial^+ \mathcal{B}_r^+} t^{2\sg-1}\bigg) \leq C,
\endaligned
$$
$$
r^{2\frac{(p+1)\sg}{p-1}-n-1} \int_{\partial^+ \mathcal{B}_r^+} t^{1-2\sg} U^2 \leq C r^{2\sg-n-1}\int_{\partial^+ \mathcal{B}_r^+} t^{1-2\sg} \leq C
$$
and 
$$
r^{2\frac{(p+1)\sg}{p-1}-n+1}\int_{\partial B_r} u^{p+1} \leq C,
$$
where the constant $C$ also depend only on $n, p, \sg$. Now  we easily conclude that $E(r; U)$ is uniformly bounded for $0< r < \frac{1}{2}$. By the monotonicity of $E(r; U)$, we obtain the limit 
$$\lim_{r\rightarrow 0^+} E(r; U)$$
exists. 
\end{proof}

\begin{proposition}\label{EAB304}
Let $U$ be a nonnegative solution of \eqref{Iso4} with $\frac{n}{n-2\sg} < p< \frac{n+2\sg}{n-2\sg}$. Suppose that $U$ is homogeneous of degree $-\frac{2\sg}{p-1}$. Then either $U\equiv 0$ in $\overline{\mathbb{R}^{n+1}_+}$, or the trace $u(x)=U(x, 0)$ of $U$  is of the form
$$
u(x)=A_{n,p,\sg}|x|^{-\frac{2\sg}{p-1}}, 
$$
where $A_{n,p,\sg}$ is given by \eqref{A}. 
\end{proposition}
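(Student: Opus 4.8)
The plan is to use Theorem~\ref{THM02} together with the homogeneity hypothesis to force the trace to be an explicit power of $|x|$, and then to evaluate $(-\Delta)^\sg$ on that power in order to identify the constant.

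If $U\equiv 0$ on $\overline{\mathbb{R}^{n+1}_+}$ we are in the first alternative, so assume $U\not\equiv 0$. Being nonnegative, continuous on $\overline{\mathbb{R}^{n+1}_+}\setminus\{0\}$, and homogeneous of degree $-\frac{2\sg}{p-1}<0$, the function $U$ is not identically zero on the unit upper half-sphere $\overline{\mathbb{S}^n_+}$; choosing $X_0\in\overline{\mathbb{S}^n_+}$ with $U(X_0)>0$ and letting $r\to 0^+$ along $rX_0$ gives $U(rX_0)=r^{-2\sg/(p-1)}U(X_0)\to\infty$, so the origin is a non-removable singularity. Theorem~\ref{THM02} then yields $U(x,t)=U(|x|,t)$, hence the trace $u(x)=U(x,0)$ is radially symmetric; since $u$ also inherits the homogeneity, $u(\lambda x)=\lambda^{-2\sg/(p-1)}u(x)$ for $\lambda>0$, this forces $u(x)=c\,|x|^{-2\sg/(p-1)}$ with $c=u(e_1)\ge 0$. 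To rule out $c=0$: if $u\equiv 0$ then the conormal datum $\frac{\partial U}{\partial\nu^\sg}(\cdot,0)=u^p$ vanishes, and the interior strong maximum principle forces either $U\equiv 0$ in $\mathbb{R}^{n+1}_+$ (hence, by continuity, on $\overline{\mathbb{R}^{n+1}_+}\setminus\{0\}$, contradicting unboundedness of $U$ at the origin) or $U>0$ in $\mathbb{R}^{n+1}_+$, in which case the Hopf lemma for $\operatorname{div}(t^{1-2\sg}\nabla\cdot)$ at a flat boundary point gives $\frac{\partial U}{\partial\nu^\sg}(x,0)<0$ somewhere on $\{t=0\}\setminus\{0\}$, a contradiction. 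Hence $c>0$.

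It remains to compute $c$. Put $\tau=\frac{2\sg}{p-1}$; the hypothesis $\frac{n}{n-2\sg}<p<\frac{n+2\sg}{n-2\sg}$ is equivalent to $\frac{n-2\sg}{2}<\tau<n-2\sg$, so in particular $0<\tau<n$ and $0<\tau+2\sg<n$, and the classical formula for the fractional Laplacian of a power function applies:
$$(-\Delta)^\sg\bigl(|x|^{-\tau}\bigr)=\Lambda\!\left(\tfrac{n-2\sg}{2}-\tau\right)|x|^{-\tau-2\sg}\qquad\text{in }\mathbb{R}^n\setminus\{0\},$$
where $\Lambda$ is the function in \eqref{A}: writing $\alpha=\tfrac{n-2\sg}{2}-\tau$, the quantity $\Lambda(\alpha)$ equals $2^{2\sg}\,\Gamma(\tfrac{\tau+2\sg}{2})\,\Gamma(\tfrac{n-\tau}{2})\big/\bigl[\Gamma(\tfrac{\tau}{2})\,\Gamma(\tfrac{n-\tau-2\sg}{2})\bigr]$, which is finite and strictly positive precisely because $|\alpha|<\tfrac{n-2\sg}{2}$ keeps every Gamma argument positive. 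By the Caffarelli--Silvestre extension, $u$ satisfies $(-\Delta)^\sg u=u^p$ pointwise in $\mathbb{R}^n\setminus\{0\}$; since $p\tau=\tau+2\sg$, substituting $u=c|x|^{-\tau}$ yields $c\,\Lambda(\alpha)\,|x|^{-\tau-2\sg}=c^p|x|^{-\tau-2\sg}$, so $c^{p-1}=\Lambda\!\left(\tfrac{n-2\sg}{2}-\tfrac{2\sg}{p-1}\right)$, i.e.\ $c=A_{n,p,\sg}$, which is the second alternative.

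The only genuinely non-routine step is the power-function formula for $(-\Delta)^\sg$ with its exact Gamma-function constant; I would either cite the classical computation (it follows from $\widehat{|x|^{-\tau}}=c_{n,\tau}|\xi|^{-(n-\tau)}$ together with the Fourier symbol $|\xi|^{2\sg}$ of $(-\Delta)^\sg$, carrying along both normalizing constants) or give a short self-similar derivation through the extension: the extension of $|x|^{-\tau}$ has the form $W(x,t)=|x|^{-\tau}w(t/|x|)$, for which $\operatorname{div}(t^{1-2\sg}\nabla W)=0$ reduces to a hypergeometric ODE for $w$, and $-\lim_{t\to0^+}t^{1-2\sg}\partial_t W$ produces exactly $\Lambda(\tfrac{n-2\sg}{2}-\tau)$. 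Everything else is immediate once Theorem~\ref{THM02} is in hand: radial symmetry plus homogeneity collapses $u$ to a one-parameter family of powers, and the nonlinear equation selects the parameter.
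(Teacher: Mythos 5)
Your proposal is correct and follows the same overall skeleton as the paper: invoke Theorem~\ref{THM02} for radial symmetry, combine with homogeneity to reduce the trace to $c|x|^{-2\sg/(p-1)}$, and then identify $c$ by the classical Gamma-function computation of $(-\Delta)^\sg|x|^{-\tau}$ (the paper cites Lemma~3.1 of Fall for exactly this). The differences lie in the preliminary steps. To establish that the origin is a non-removable singularity, the paper first applies the strong maximum principle to get $U>0$ in $\mathbb{R}^{n+1}_+\setminus\{0\}$ and then invokes the Liouville-type theorem of Jin--Li--Xiong for subcritical $p$ to rule out the removable case. You instead exploit the homogeneity hypothesis directly: a nontrivial, nonnegative, degree-$(-2\sg/(p-1))$-homogeneous $U$ must blow up along a ray, which already forces non-removability. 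This is more elementary (it avoids the Liouville theorem as an external input), though it buys nothing beyond simplicity since both conclusions are available. You also make explicit, via the Hopf lemma at the flat boundary, why the trace cannot vanish identically; the paper leaves this implicit when it asserts that $u$ is ``a positive constant $a$ on $\partial B_1$.'' Your extra care there is warranted, since the strong maximum principle by itself only gives positivity of $U$ in the open half-space, not of the trace. In short: same proof at its core, with a cleaner route to non-removability and a more careful handling of positivity on the boundary.
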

\bp
Suppose that $U$ is nontrivial solution, then by strong maximum principle, 
$$U(x, t) >0 ~~~~~~~\textmd{in} ~~ \mathbb{R}^{n+1}_+ \backslash \{0\}.$$
Hence,  by the Liouville type theorem in \cite{J-L-X}, the origin 0 must be a non-removable singularity.  By Theorem \ref{THM02}, $u(x)$ is radially symmetric, hence $u$ is a positive constant $a$ on $\partial B_1$.  
By the homogeneity of $u$, we have
$$
u(x)=a|x|^{-\frac{2\sg}{p-1}}. 
$$
On the other hand, since $u$ satisfies
$$
(-\Delta)^\sigma u = u^p~~~~~\textmd{in}~ \mathbb{R}^n\backslash \{0\}. 
$$
By a classical calculation,  see, for instance, Lemma 3.1 in Fall \cite{Fall}, we obtain
$$
a=A_{n,p,\sg}. 
$$
This finishes the proof. 
\ep

\noindent {\it Proof of Theorem \ref{THM01}.}   Suppose that the origin 0 is a non-removable singularity, we only need to prove \eqref{Be01}. Consider the scaling
$$
U^\lambda(X)=\lambda^{\frac{2\sg}{p-1}} U(\lambda X). 
$$
Then $U^\lambda$ satisfies
$$
\begin{cases}
-\textmd{div}(t^{1-2\sg}  \nabla U^\lambda)=0~~~~~~~~~& \textmd{in} ~ \mathcal{B}_{1/\lambda}^+,\\
\frac{\partial U^\lambda}{\partial \nu^\sg}(x, 0)=(U^\lambda(x, 0))^p~~~~~~~~& \textmd{on} ~ \partial^0\mathcal{B}_{1/\lambda}^+ \backslash  \{0\}. 
\end{cases}
$$
By \eqref{Be00} and Remark 1.2 in \cite{Y-Z}
\begin{equation}\label{TA01}
C_1 |X|^{-\frac{2\sg}{p-1}}\leq U^\lambda (X) \leq C_2 |X|^{-\frac{2\sg}{p-1}}~~~~~\textmd{in} ~ \mathcal{B}_{1/\lambda}^+. 
\end{equation}
Thus $U^\lambda$ is locally uniformly bounded away from the origin. By Corollary 2.10 and Theorem 2.15  in \cite{J-L-X} that  there exists $\alpha >0$ such that for every $R > 1 > r > 0$
$$
\|U^\lambda \|_{W^{1,2}(t^{1-2\sg}, \mathcal{B}_R^+ \backslash \overline{\mathcal{B}}_r^+)} + \|U^\lambda\|_{C^\alpha (\mathcal{B}_R^+ \backslash \overline{\mathcal{B}}_r^+)} + \|u^\lambda\|_{C^{2,\alpha}(B_R \backslash B_r)} \leq C(R, r),
$$
where $u^\lambda(x)=U^\lambda(x, 0)$ and $C(R, r)$ is independent of $\lambda$.  Then there is a subsequence $\lambda_k \rightarrow 0$,  $\{U^{\lambda_k}\}$ converges to a nonnegative function $U^0  \in W_{loc}^{1,2}(t^{1-2\sg}, \overline{\mathbb{R}^{n+1}_+} \backslash \{0\}) \cap C^\alpha_{loc}(\overline{\mathbb{R}^{n+1}_+} \backslash \{0\})$ satisfying 
$$
\begin{cases}
-\textmd{div}(t^{1-2\sg}  \nabla U^0)=0~~~~~~~~~& \textmd{in} ~\mathbb{R}^{n+1}_+,\\
\frac{\partial U^0}{\partial \nu^\sg}(x, 0)=(U^0(x,0))^p~~~~~~~~& \textmd{on} ~\mathbb{R}^n \backslash \{0\}, 
\end{cases}
$$
and by \eqref{TA01} we have 
\begin{equation}\label{TA02}
C_1 |X|^{-\frac{2\sg}{p-1}} \leq U^0(X) \leq C_2 |X|^{-\frac{2\sg}{p-1}}~~~~~~\textmd{in} ~\mathbb{R}^{n+1}_+\backslash \{0\}. 
\end{equation}
Moreover, since the scaling relation \eqref{EN01}, we have, for any $r>0$,
$$
E(r; U^0)= \lim_{k\rightarrow \infty}E(r; U^{\lambda_k})  = \lim_{k\rightarrow \infty}E(r\lambda_k; U)=E(0^+; U).
$$
Thus, by Proposition 3.1, $U^0$ is homogeneous of degree $-\frac{2\sg}{p-1}$. It follows from Proposition 3.3 and \eqref{TA02} that the trace $u^0(x)=U^0(x, 0)$  must be of the form
$$
u^0(x)=A_{n,p,\sg}|x|^{-\frac{2\sg}{p-1}}, 
$$
where $A_{n,p,\sg}$ is given by \eqref{A}. Since the function $u^0(x)$ is unique, we conclude that $u^\lambda(x)\rightarrow u^0(x)$ for any sequence $\lambda\rightarrow 0$ in $C^\alpha_{loc}(\mathbb{R}^n \backslash \{0\})$.  Hence
$$
|\lambda x|^{\frac{2\sg}{p-1}} u(\lambda x)= |x|^{\frac{2\sg}{p-1}} u^\lambda(x)\rightarrow A_{n,p,\sg}~~~~~ \textmd{as}~~ \lambda \rightarrow 0
$$
in $B_2 \backslash B_{1/2}$. We immediately conclude that
$$
\lim_{|x|\rightarrow 0} |x|^{\frac{2\sigma}{p-1}} u(x) = A_{n,p,\sigma}.
$$
\hfill$\square$

\end{document}